\newtheorem{remark}{Remark}[section]
\newtheorem{prop}{Proposition}[section]
\newtheorem{corollary}{Corollary}[section]
\newtheorem{lemma}{Lemma}[section]
\newtheorem{theorem}{Theorem}[section]
\newtheorem{hypo}{Hypothesis}[section]
\numberwithin{equation}{section}
\newcommand\be{\begin{equation}}
\newcommand\ee{\end{equation}}
\def\ds{\displaystyle}
\def\Rset{\mathbb{R}}
\title{Optimal control of membrane filtration systems}
\author{
N. Kalboussi\thanks{Université de Carthage, Institut National ds
  Sciences Appliquées et de Technologie \& Université de Tunis El
  Manar, Ecole Nationale d'Ingénieurs de Tunis, Laboratoire de
  Modélisation Mathématique et Numérique dans les sciences d'ingénieur,
  Tunis, Tunisia, {\tt nesrinekalboussi@gmail.com}, {\tt benamar\_nihel@yahoo.fr}, {\tt ellouze\_fatma@yahoo.fr}}
\and
A. Rapaport\thanks{MISTEA, INRA, Montpellier SupAgro, Univ
  Montpellier, Montpellier, France, {\tt alain.rapaport@inra.fr}}
\and
T. Bayen$^{\dagger ,}$\thanks{Institut Montpelliérain Alexander Grothendieck, CNRS, Univ. Montpellier {\tt
    terence.bayen@umontpellier.fr}}
\and
N. Ben Amar$^*$
\and
F. Ellouze$^*$
\and
J. Harmand\thanks{LBE, INRA, Univ Montpellier, Narbonne, France, {\tt jerome.harmand@inra.fr}}
}
\date{September 30, 2017}
\begin{document}

\maketitle

\begin{abstract}
This paper studies an optimal control problem related to membrane
filtration processes. A simple mathematical model of membrane
fouling is used to capture the dynamic behavior of the filtration
process which consists in the attachment of matter onto the membrane
during the filtration period and the detachment of matter during the
cleaning period. We consider the maximization of the net production of a membrane filtration
system (\emph{i.e.} the filtrate) over a finite time horizon, where
control variable is the sequence of filtration/backwashing cycles
over the operation time of process. Based on the Pontryagin Maximum
Principle, we characterize the optimal control strategy and show
that it presents a singular arc. Moreover we prove the existence of
an additional switching curve before reaching the terminal state,
and also the possibility of having a dispersal curve as a locus
where two different strategies  are both optimal.
\end{abstract}

\noindent {\bf Key-words.} Membrane filtration process, Physical
backwash strategy, Optimal Control, Pontryagin Maximum Principle, Singular Arcs.

\section{Introduction}
Membrane filtration systems are widely used as physical separation
techniques in different industrial fields like water desalination,
wastewater treatment, food, medicine and biotechnology. The membrane
provides a selective barrier that separates substances when a
driving force is applied across the membrane.

Different fouling mechanisms are responsible of the flux decline at
constant transmembrane pressure (TMP) or the increase of the TMP at
a constant flux. Hence, the operation of the membrane filtration
process requires to perform regularly cleaning actions like
relaxation, aeration, backwashing and chemical cleaning to limit the
membrane fouling and maintain a good filtrate production.

Usually, sequences of filtration and membrane cleaning are fixed
according to the recommendations of the membrane suppliers or chosen
according to the operator's experience. This leads to high
operational cost and to performances (for example, quantities of
fluid filtered over a given period of time in a membrane
filtration process) that can be far from being optimal. For this
reason, it is important to optimize the membrane filtration process
functioning in order to maximize system performances while
minimizing energy costs.

A variety of control approaches have been proposed to manage
filtration processes. In practice such strategies were based on the
application of a cleaning action (physical or chemical) when either
the flux decline through the membrane or the TMP increase crosses
predefined threshold values \cite{Ferrero2012}. Smith \emph{et
al.} developed a control system that monitors the TMP evolution over
time and initiates a membrane backwash when the TMP exceeds a given
set-point, \cite{Smith1958}.  In \cite{Hong2008} the TMP was also
used as the monitoring variable but the control action was the
increase or decrease of `membrane aeration'. The permeate flux was
used in \cite{Vargas2008} as the control variable to optimize the
membrane backwashing and prevent fouling. Moreover, knowledge-based
controllers found application in the control of membrane filtration
process.  In \cite{Robles2013}, Robles \emph{et al.} proposed an
advanced control system composed of a knowledge-based controller and
two classical controllers (on/off and PID) to manage the aeration
and backwash sequences. The permeability was used by
\cite{Ferrero2011} as a monitoring variable in a knowledge-based
control system to control membrane aeration flow.

To date, different available control systems are able to increase significantly the membrane filtration process performances.
However, more enhanced optimal control strategies are needed to cope with the dynamic operation of the purifying system and to limit membrane fouling.
The majority of the control strategies previously cited address energy
consumption, but regulation and control have not being proved to be optimal.

In the present work, we consider the maximization of the net fluid production (\emph{i.e.} the filtrate) per area of a membrane
filtration system over a given operation duration. 
 The
control variable
is the direction of the flow rate: forward for filtration through
the membrane and backward for backwashing attached foulants.
This problem is quite generic for various fluids to be filtered
Membrane fouling is assumed to be only due to the particle
deposition onto the membrane surface while pores blocking is
neglected. 
This problem appears of primer importance for water treatment, especially in the
actual context of worldwide scarcity of water of `good' quality.

The modeling of the process then leads to consider an optimal
control problem governed by a one-dimensional non-linear dynamics
taking into account the filtration and backwash operating modes.
This optimal control problem falls into the class of classical
Lagrange problems, but in which the Hamiltonian is linear in the
control variable. For such problems, it is well known that several
singularities may occur such as singular arcs, switching surfaces
(\emph{cf.} \cite{bonnard})... The aim of the present work is to
give a complete optimal synthesis of this problem in a quite generic
way ({\it{i.e.}} without giving the exact expressions of the
functions involved in the model) characterizing the occurrence of
such singularities.  The analysis of these singularities is
important for practical implementation because it gives the
structure of the control strategies to be applied (how many
switches, where or when to switch...) and the information
({\it{i.e.}} which variable and when) that is needed to be measured.
The paper is organized as follows.
\begin{itemize}
\item In Section \ref{model-sec}, we present the model that allows us to state the optimal control problem
and we give preliminary results about the structure of the optimal control near to the terminal time.
\item Section \ref{singulararc-sec} is devoted to the
analysis of singular arcs (existence and optimality).
\item In Section \ref{switchcurve-sec}, we show that a {\it{switching curve}} may appear and moreover
that a phenomenon of `dispersion' may occur. This allows us to
provide a complete
description of an optimal feedback control of the problem (see Theorem \ref{theo-conclusion}).
\item Section \ref{appli-sec} depicts the previous analysis on two different models. In the first one,
the optimal synthesis involves both a singular arc and a switching curve only whereas the second one example
also exhibits a {\it{dispersal curve}} (see {\it{e.g.}} \cite{dispersal}).  Such a curve is a locus where the optimal control is non-unique :
the corresponding trajectories (in our case in number $2$) reach the terminal state with the same cost.
\item Finally, several possible extensions of this study are discussed in the conclusion. 
\end{itemize}

\section{Model description and preliminary results}\label{model-sec}

To describe the membrane filtration process, we consider a simple
form of the model of \cite{Benyahia2013}. In a previous work, it was
shown that this model is very generic in the sense that it is able
to capture the dynamics of a large number of models available in the
literature while simple enough to be used for optimizing and control
purposes, see \cite{Kalboussi2016}. In the present work, it is
assumed that the membrane fouling is only due to the particle
deposition onto the membrane surface. Let $m$ be the mass of the
cake layer formed during the filtration ($m\geq 0$). One can
assume that $m$ follows a differential equation
$$
\dot m = f_{1}(m),
$$
where $f_1:\Rset_+\rightarrow \Rset_+$.
We further assume that the physical cleaning of the membrane is
performed by a backwashing which consists in reversing the flow.
During this phase, the filtration is stopped and the mass detaches
from the membrane surface with a dynamics
$$
\dot m = -f_{2}(m),
$$
where $f_2:\Rset_+\rightarrow \Rset_+$. The considered system is operated by alternating two functioning
modes: filtration and backwash.
For this reason, we consider a control $u$ that takes values $1$
during filtration and $-1$ during retro washing.
Then, the controlled dynamics can be
written as follows
\begin{equation}
\label{dm}
\dot m= \frac{1+u}{2}f_{1}(m)-\frac{1-u}{2}f_{2}(m) \quad \mathrm{with} \quad m(0)=m_{0},
\end{equation}
where $m_0\geq 0$ is the initial mass. As already mentioned in the introduction, the aim of this work is to
determine an optimal switching between the two functioning modes which
maximizes the net fluid production of the membrane filtration process
during a time interval $[0,T]$.
Assuming that the flux that passes through the membrane during forwards and backwards operation is given by
a function $g:\Rset_+\rightarrow \Rset$ that depends on $m$, the net
amount of fluid per
area of membrane during a time interval
$[0,T]$ is then
\[
J_{T}(m_{0},u(\cdot))=\int_{0}^T u(t)g(m(t)) dt .
\]
Given an initial condition $m_{0}\geq 0$, the objective of the paper is to determine an optimal
strategy $u(\cdot)$ that takes values $-1$ or $1$ maximizing
$J_{T}(m_{0},u(\cdot))$.
Nevertheless, it is well known from the theory of optimal control that
the existence of an optimal trajectory cannot be guaranteed when the
control set is non convex \cite{LeeMarkus1967}. Therefore, we shall consider for the
mathematical analysis that the control $u(\cdot)$ can take values in
the interval $[-1,1]$. Hence, we will focus in this paper on the following optimal control problem:
$$
\max_{u(\cdot)\in \mathcal{U}} J_{T}(m_{0},u(\cdot)),
$$
where $\mathcal{U}$ denotes the set of measurable functions over $[0,T]$ taking values in $[-1,1]$.
The question of practical applicability of a
control that takes values different to $-1$ and $1$ relies on
approximations with chattering controls \cite{ZelikinBorisov1994}
and is exposed in \cite{Kalboussi2017,Kalboussi2017b} (see also \cite{BGM1} in the context of fed-batch bioprocesses).

Next, we consider the following hypotheses on the model.
\begin{hypo}
\label{H1}
The functions $f_{1}$, $f_{2}$ and $g$ are $C^1$ functions such
that
\begin{enumerate}
\item[i.] $f_{1}(m)>0$ and $g(m)>0$  for any $m\geq 0$,
\item[ii.] $f_{2}(0)=0$ and $f_{2}(m)>0$ for $m>0$,
\item[iii.] $f_{1}$ and $g$ are decreasing with $\lim_{m\to+\infty}g(m)=0$,
\item[iv.] $f_{2}$ is increasing.
\end{enumerate}
\end{hypo}
Let us comment about these hypotheses:
\begin{itemize}
\item When a membrane operates in filtration, the resistance to flow
is never null and increases according to the mass $m $ of the cake layer formed on the
membrane surface, which subsequently decreases the permeate flux.
Thus, we assume that the rate $f_{1}$ at which the mass of material
adheres to the membrane surface during filtration is a positive
decreasing function.
\item When starting membrane backwash, the cake layer is
decomposed and the membrane's permeability increases again. So, the
speed $f_{2}$ of the cake detachment can be described by a
positive increasing function. When the membrane is clean ($m=0$),
there is nothing to be detached: $f_{2}(0)=0$.
\item At constant TMP, the permeate flux decreases as the extent of
  fouling gradually increases. Therefore, the variation of the
  permeate flux $J$ can be described by a decreasing
  positive function of the mass of the fouling layer.
\end{itemize}

Thanks to Hypothesis \ref{H1}, one can straightforwardly check the following property.

\begin{lemma}
The domain $\{m>0\}$ is positively invariant whatever is the control $u(\cdot)$.
\end{lemma}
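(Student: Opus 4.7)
The plan is to show by contradiction that a trajectory starting at $m_0>0$ cannot hit $0$ in finite time, exploiting the smoothness of $f_2$ and the condition $f_2(0)=0$ to obtain a lower comparison with an exponential.

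First, I would fix an arbitrary admissible control $u(\cdot)\in\mathcal{U}$ and an initial condition $m_0>0$. By the standard Carathéodory theory, the Cauchy problem associated with \eqref{dm} admits a (locally) absolutely continuous solution $m(\cdot)$. Suppose, for contradiction, that the set $\{t\geq 0 : m(t)=0\}$ is nonempty on the interval of existence, and let $t^{\star}:=\inf\{t\geq 0 : m(t)=0\}$. By continuity, $m(t^\star)=0$ and $m(t)>0$ for every $t\in[0,t^\star)$.

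Second, I would use the regularity of $f_2$. Since $m$ is continuous on the compact interval $[0,t^\star]$, it is bounded by some $M>0$. As $f_2$ is $C^1$ on $[0,M]$ with $f_2(0)=0$, there exists $L>0$ such that $f_2(m)\leq Lm$ for all $m\in[0,M]$. Using that $f_1(m)\geq 0$ and that $(1-u)/2\in[0,1]$, we obtain for almost every $t\in[0,t^\star]$:
\[
\dot m(t)=\frac{1+u(t)}{2}f_1(m(t))-\frac{1-u(t)}{2}f_2(m(t))\geq -f_2(m(t))\geq -Lm(t).
\]

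Third, I would apply the Gronwall inequality (in its differential form) to the bound above, yielding $m(t)\geq m_0 e^{-Lt}$ for every $t\in[0,t^\star]$. In particular $m(t^\star)\geq m_0 e^{-Lt^\star}>0$, which contradicts $m(t^\star)=0$. Hence $m(t)>0$ for all $t\geq 0$ on the interval of existence, and the solution can be extended globally because of this lower bound preventing blow-down.

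The only mildly delicate point is the passage from $f_2(0)=0$ and $f_2\in C^1$ to the linear bound $f_2(m)\leq Lm$ on a compact set, which is immediate but has to be stated carefully because the Gronwall argument requires a linear (not merely continuous) majorant at the boundary; apart from that, the proof is routine.
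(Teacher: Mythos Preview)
Your argument is correct: the Gronwall comparison $\dot m \geq -f_2(m) \geq -Lm$ (using $f_2(0)=0$ and $f_2\in C^1$ on a compact) is exactly the right way to rule out hitting zero in finite time. The paper itself does not supply a proof of this lemma at all---it simply states that the result follows ``straightforwardly'' from Hypothesis~\ref{H1}---so your write-up is in fact more detailed than what the authors provide, and it makes precise the intended mechanism (the Lipschitz behavior of $f_2$ at the origin).

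One minor remark: your closing sentence about global extension (``preventing blow-down'') is slightly beside the point, since the lemma only asserts positive invariance of $\{m>0\}$, not global existence; forward global existence would also require ruling out blow-up $m\to+\infty$, which follows separately from the boundedness of $f_1$ (it is decreasing, hence $\dot m\leq f_1(0)$). This does not affect the validity of your proof of the stated lemma.
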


For convenience, we define two functions $f_+:\Rset_+\rightarrow \Rset_+$ and $f_-:\Rset_+\rightarrow \Rset$ defined by
\[
f_{+}(m):=\frac{f_{1}(m)+f_{2}(m)}{2}, \quad f_{-}(m):=\frac{f_{1}(m)-f_{2}(m)}{2},
\]
thus the dynamics can be equivalently written
$$
\dot{m}=f_-(m)+uf_+(m), \quad u\in [-1,1].
$$
We shall use the Maximum Principle of Pontryagin (PMP) \cite{Pontryagin1964}
in order to determine necessary conditions on optimal trajectories.
For this purpose, we introduce the Hamiltonian of the
system defined by\footnote{As the terminal state is free, no abnormal trajectories will occur, moreover we will write
the Hamiltonian condition \eqref{PMP} with a maximum.}
\begin{equation}
\label{Hamiltonian}
H(m,\lambda,u)=\lambda f_{-}(m)+u\left(\lambda f_{+}(m)+g(m)\right).
\end{equation}
According to the Pontryagin Maximum Principle, if $u(\cdot)$ is an optimal control and $m(\cdot)$ the associated trajectory, there exists an absolutely continuous function
$\lambda:[0,T]\rightarrow \Rset$ called adjoint vector satisfying the adjoint equation for a.e. $t\in [0,T]$:
\begin{align}\label{dlambda}
\dot \lambda (t)&= -\frac{\partial H}{\partial m}(m(t),\lambda(t),u(t))\\
&=-\lambda(t) f_{-}'(m(t))-u(t)\left(\lambda(t) f_{+}'(m(t))+g'(m(t))\right), 
\end{align}
together with the terminal condition $\lambda(T)=0$. Moreover, the {\it{Hamiltonian condition}} is satisfied:
\be{\label{PMP}}
u(t)\in \mathrm{arg} \max_{\omega\in [-1,1]} H(x(t),\lambda(t),\omega), \quad \mathrm{a.e.} \; t\in [0,T].
\ee
Thanks to this expression, an optimal control necessarily satisfies:
\be{\label{PMP-BB}}
u = \left|\begin{array}{cl}
+1 & \mbox{when } \phi(m,\lambda)>0,\\
-1 & \mbox{when } \phi(m,\lambda)<0,\\
\in [-1,1] &\mbox{when } \phi(m,\lambda)=0,
\end{array}\right.
\ee
where $\phi$ is the {\it{switching function}} defined by
\[
\phi(m,\lambda):=\lambda f_{+}(m)+g(m).
\]
The adjoint vector $\lambda$ satisfies the following property.
\begin{prop}
\label{prop1}
Under Hypothesis \ref{H1}, the adjoint variable satisfies
$\lambda(t)<0$ for any $t \in [0,T[$.
Moreover, for any initial condition $m_{0}$ there exists $\bar t <T$
such that the control $u(t)=1$ is optimal for $t \in [\bar t,T]$.
\end{prop}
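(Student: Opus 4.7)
The plan is to exploit the transversality condition $\lambda(T)=0$ together with the structure of $\phi$ to bootstrap sign information backward in time.

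First I would evaluate the switching function at the terminal time: $\phi(m(T),\lambda(T)) = g(m(T)) > 0$ by Hypothesis \ref{H1}(i). By continuity of $m(\cdot)$ and $\lambda(\cdot)$ (the latter being absolutely continuous), there exists $\bar t < T$ such that $\phi(m(t),\lambda(t)) > 0$ for all $t \in [\bar t,T]$. The Hamiltonian condition \eqref{PMP-BB} then immediately forces $u(t) = +1$ a.e.\ on $[\bar t,T]$, which is the second claim.

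Next I would establish $\lambda(t)<0$ on a left-neighborhood of $T$. Since $u \equiv +1$ on $[\bar t,T]$, the adjoint equation simplifies there and evaluating at $T$ gives
\[
\dot\lambda(T) = -\lambda(T) f_-'(m(T)) - \bigl(\lambda(T) f_+'(m(T)) + g'(m(T))\bigr) = -g'(m(T)) > 0,
\]
using $\lambda(T)=0$ and that $g$ is strictly decreasing (Hypothesis \ref{H1}(iii)). Hence $\lambda$ is strictly increasing at $T$ with terminal value $0$, so $\lambda(t) < 0$ on some interval $(T-\varepsilon,T)$.

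To extend this to all of $[0,T[$ I would argue by contradiction. Suppose the set $Z := \{t\in[0,T[ \,:\, \lambda(t)=0\}$ is nonempty, and set $t_0 := \sup Z$. By the previous step $t_0 \leq T-\varepsilon < T$, by continuity $\lambda(t_0)=0$, and by the definition of $t_0$ combined with $\lambda<0$ near $T$ we get $\lambda(t) < 0$ for every $t\in(t_0,T)$. At $t_0$ we again have $\phi(m(t_0),\lambda(t_0)) = g(m(t_0)) > 0$, so by continuity of $\phi\circ(m,\lambda)$ the control satisfies $u=+1$ a.e.\ on some interval $[t_0,t_0+\delta)$. The same computation as above yields the right-derivative $\dot\lambda(t_0^+) = -g'(m(t_0)) > 0$. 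But since $\lambda(t_0)=0$ and $\lambda(t)<0$ for $t>t_0$ close to $t_0$, the right difference quotient is nonpositive, forcing $\dot\lambda(t_0^+)\leq 0$ — a contradiction. Hence $Z=\emptyset$ and $\lambda(t)<0$ throughout $[0,T[$.

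The main subtlety, and the step I would be most careful about, is the regularity argument at $t_0$: the PMP only determines $u$ almost everywhere, so one has to justify that $u=+1$ in an a.e.\ sense on a right-neighborhood of $t_0$ (which follows from the strict sign of $\phi$ there), and then conclude that $\lambda$ is $C^1$ on that neighborhood with the claimed right-derivative. Once this is done the contradiction is immediate, and both assertions of the proposition follow.
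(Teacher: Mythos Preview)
Your proof is correct and follows essentially the same approach as the paper: both hinge on the observation that whenever $\lambda=0$ one has $\phi=g(m)>0$, hence $u=1$ and $\dot\lambda=-g'(m)>0$, which is incompatible with $\lambda$ vanishing before $T$ and reaching $\lambda(T)=0$. The only cosmetic differences are the order of the two claims and that the paper argues forward (if $\lambda(\bar t)=0$ then $\lambda>0$ thereafter, contradicting $\lambda(T)=0$) whereas you take the supremum of zeros and contradict the right-derivative; your explicit handling of the a.e.\ regularity issue is in fact more careful than the paper's terse version.
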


\begin{proof}
At $\lambda=0$, one has $\phi(m,0)=g(m)>0$ and then $u=1$ which
implies to have $\dot \lambda=-g'(m)>0$. If $\lambda(t)=0$ for
some $\bar t<T$ then one has necessarily $\lambda(t)>0$ for any
$t>\bar t$ which is in contradiction with $\lambda(T)=0$. Therefore
$t\mapsto \lambda(t)$
is non-null and has constant sign on $[0,T[$. As $\lambda$ has to
reach $0$ at time $T$ with $\dot \lambda(T)>0$, we conclude that $\lambda$
has to be negative on $[0,T[$.
At the terminal time, one has
$\phi(m(T),\lambda(T))=\phi(m(T),0)=g(m(T))>0$. By continuity, the
function $t \mapsto \phi(m(t),\lambda(t))$ remains positive on a time
interval $[\bar t,T]$ with $\bar t<T$, thus we necessarily have $u=1$ on
this interval.
\end{proof}
A triple $(x(\cdot),\lambda(\cdot),u(\cdot))$ is called an extremal trajectory if it satisfies \eqref{dm}-\eqref{dlambda}-\eqref{PMP}.
Since the system and the cost are autonomous ({\it{i.e.}} they do not explicitly depend on the time $t$), the Hamiltonian $H$
is constant along any extremal trajectory.

We call {\it{switching time}} (or switch) an instant $t_s\in [0,T]$
where the optimal control is non-constant in any neighborhood of $t_s$. It follows that at such an instant $t_s$, $\phi$ is necessarily vanishing {\it{i.e.}}
$\phi(t_s)=0$.  We then say that the trajectory has a {\it{switching point}} at the time $t_s$.
As the Hamiltonian $H$ is linear with respect to the control variable, we
know that the optimal solution is a combination of bang-bang controls
and possible singular arcs. Recall that a {\it{singular arc}} is a time interval on which the switching function $\phi$ is identically equal to zero
(see \cite{bonnard,Boscain2005} for a thorough study of this notion).
Since the Hamiltonian is linear w.r.t. the control $u$,
the Hamiltonian condition \eqref{PMP} does not imply straightforwardly an expression of the optimal control as
in \eqref{PMP-BB}. In the two coming sections, we study first
the possibility of having a singular arc, and then the
possibility of having switching points
outside the singular arc.

\section{Singular arc and first optimality results}\label{singulararc-sec}
In this section, we show that singular arcs may appear in the optimal synthesis of the problem.
For convenience, we define a function $\psi:\Rset_+\rightarrow \Rset$ by:
\[
\psi(m):=g(m)\left[f_{-}'(m)f_{+}(m)-f_{-}(m)f_{+}'(m)\right]+g'(m)f_{+}(m)f_{-}(m), \quad m \geq 0.
\]
It will be also convenient to introduce the function $\gamma:\Rset_+\rightarrow \Rset$ defined as
\[
\gamma(m):=-\frac{g(m)f_{-}(m)}{f_{+}(m)} \quad m\geq 0. 
\]
We now consider the following hypothesis:
\begin{hypo}
\label{H2}
The function $\psi$ admits an unique positive root $\bar m$ and is
such that $\psi(m)(m-\bar m)>0$ for any positive $m \neq \bar m$.
\end{hypo}
Under Hypothesis \ref{H2}, one can characterize $m=\bar m$ as the unique
candidate singular arc.
\begin{lemma}
Consider a singular arc defined over a time interval $[t_1,t_2]$. Then the corresponding extremal singular trajectory
$(m(\cdot),\lambda(\cdot),u(\cdot))$ satisfies $m(t)=\bar m$ and $u(t)=\bar u$, $t\in [t_1,t_2]$, where
\begin{equation}
\label{ubar}
\bar u := -\frac{f_{-}(\bar m)}{f_{+}(\bar m)}.
\end{equation}
Moreover, $\lambda(\cdot)$ is constant equal to $\bar \lambda$ where $\bar \lambda\in \Rset$ is defined by
\be{\label{ad-sing}}
\bar\lambda=-\frac{g(\bar m)}{f_{+}(\bar m)}.
\ee
\end{lemma}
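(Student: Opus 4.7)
The plan is to use the standard two-step derivation for singular arcs in control-affine problems: first extract $\lambda$ from the identity $\phi\equiv 0$, then extract a condition on $m$ from $\dot\phi\equiv 0$, and finally recover $u$ from the fact that $m$ must be stationary.

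First, on the singular interval $[t_1,t_2]$ the switching function vanishes identically, so
$\phi(m(t),\lambda(t)) = \lambda(t) f_+(m(t)) + g(m(t)) = 0$.
Since $f_+(m) > 0$ by Hypothesis \ref{H1}, this immediately yields $\lambda(t) = -g(m(t))/f_+(m(t))$ along the arc, which is the candidate formula for $\bar\lambda$ but still expressed as a function of $m(t)$.

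Second, I would differentiate $\phi$ in time along an extremal, using both the dynamics \eqref{dm} and the adjoint equation \eqref{dlambda}. A short computation shows that the two terms containing $u$ cancel, so
\[
\dot\phi = \lambda\bigl[f_+'(m)f_-(m) - f_-'(m)f_+(m)\bigr] + g'(m)f_-(m),
\]
independently of $u$. This cancellation is the technical point the reader should be warned about, though it is a routine consequence of $H$ being affine in $u$. Imposing $\dot\phi=0$ on the singular arc and substituting the expression of $\lambda$ obtained in the previous step, after multiplying by $f_+(m) > 0$, yields exactly $\psi(m(t)) = 0$ for $t \in [t_1,t_2]$.

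Third, Hypothesis \ref{H2} asserts that $\psi$ has a unique positive root $\bar m$, so necessarily $m(t) \equiv \bar m$ on the interval. Plugging this back into the formula from step 1 gives $\lambda(t) \equiv \bar\lambda = -g(\bar m)/f_+(\bar m)$, hence $\lambda$ is constant. Finally, since $m(\cdot)$ is constant, $\dot m(t) = 0$ on $[t_1,t_2]$, and the dynamics $\dot m = f_-(m) + u f_+(m)$ evaluated at $m=\bar m$ forces $u(t) = -f_-(\bar m)/f_+(\bar m) = \bar u$, as claimed. The main subtlety is simply keeping the algebra clean in the computation of $\dot\phi$ and its cancellation of $u$; once this is done, everything else follows from Hypothesis \ref{H2} and positivity of $f_+$.
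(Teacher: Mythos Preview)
Your proof is correct and follows essentially the same approach as the paper. The only cosmetic difference is that the paper rewrites the derivative as the general identity $\dot\phi = \psi/f_{+} + \phi\,(f_{+}'f_{-}-f_{-}'f_{+})/f_{+}$, valid along any extremal, and then reads off $\psi=0$ directly from $\phi=\dot\phi=0$; you instead substitute $\lambda=-g/f_{+}$ into $\dot\phi=0$ and clear the denominator, which amounts to the same computation.
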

\begin{proof}
For simplicity, we write $\dot{\phi}$ the time derivative of $t\mapsto \phi(m(t),\lambda(t))$ and we drop the $m$
dependency of functions $f_{-}$, $f_{+}$ and $g$. Thus, we have:
\[
\begin{array}{lll}
\dot \phi & = & -(\lambda f_{-}' + u(\lambda f_{+}'+g')) +
(\lambda f_{+}'+g')(f_{-}+f_{+}u)\\[2mm]
& = & \lambda(f_{+}'f_{-}-f_{-}'f_{+})+g'f_{-}\\[2mm]
& = &  \ds g(f_{-}'-f_{+}'f_{-}/f_{+})+g'f_{-}+ \phi\,\frac{f_{+}'f_{-}-f_{-}'f_{+}}{f_{+}},
\end{array}
\]
or equivalently
\begin{equation}
\label{dotphi}
\dot \phi =  \frac{\psi}{f_{+}} + \phi\,\frac{f_{+}'f_{-}-f_{-}'f_{+}}{f_{+}}.
\end{equation}
As a singular arc has to fulfill $\phi=0$ and $\dot\phi=0$, thus equation
(\ref{dotphi}) and Hypothesis \ref{H2} imply $\psi=0$. Then, the single possibility for having
a singular arc on a time interval $[t_{1},t_2]$ is to have $m(t)=\bar
m$ for any $t \in [t_{1},t_{2}]$.
From equation (\ref{dm}), one then obtains the constant control given
in (\ref{ubar}) for having $\dot m=0$ at $m=\bar m$. Finally, \eqref{ad-sing} is obtained using that
$\phi=\lambda f_+ + g$ is zero along a singular arc.
\end{proof}
We deduce the following optimality results.
\begin{prop}
\label{prop2}
Suppose that Hypotheses \ref{H1} and \ref{H2} hold true and let $m_0>0$ be an initial condition. Then, the following properties are satisfied:
\begin{enumerate}
\item[\emph{(i)}.] When $m_{0}<\bar m$, the control $u=1$ is optimal as long as the corresponding trajectory satisfies $m(t)<\bar m$,
\item[\emph{(ii)}.]  When $m_{0}>\bar m$, either the control $u=1$ is optimal until $t=T$,
or the control $u=-1$ is optimal until a time $\bar t <T$ with $m(\bar
t)\geq \bar m$. If $m(\bar t)>\bar m$ then $u=1$ is optimal on $[\bar t,T]$.
\item[\emph{(iii)}.] Suppose that $f_{-}(\bar m)\geq 0$. Then, for any initial condition $m_0\geq \bar m$, an optimal control satisfies
$u=-1$ over some time interval $[0,\bar t]$ with $\bar t\in [0,T]$ and $u=+1$ over $[\bar t,T]$.
\item[\emph{(iv)}.] Suppose that $f_{-}(\bar m)<0$ and let $\bar T\in \Rset$ be defined by
\begin{equation}
\label{defbarT}
\bar T:=T-\int_{\bar m}^{\bar m_{T}} \frac{dm}{f_{1}(m)} \; \mbox{ with
} \;
\bar m_{T}:=g^{-1}(\gamma(\bar m)).
\end{equation}
Then, if $\bar T>0$, any singular trajectory is optimal until $t=\bar T$.

\end{enumerate}
\end{prop}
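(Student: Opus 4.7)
The plan is to combine the switching-function analysis of \eqref{dotphi} with the monotonicity of the two bang arcs. At any switching instant $t_s$, $\phi(t_s)=0$ forces $\dot\phi(t_s)=\psi(m(t_s))/f_+(m(t_s))$; by Hypothesis \ref{H2} this is negative for $m<\bar m$ and positive for $m>\bar m$. Hence a transition $+1\to -1$ (where $\phi$ decreases through $0$) can occur only at $m\leq \bar m$, whereas $-1\to +1$ can occur only at $m\geq \bar m$. Moreover $\dot m=f_1(m)>0$ under $u=+1$ and $\dot m=-f_2(m)<0$ under $u=-1$, so the state moves monotonically along each bang arc.

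For part (i), I would argue by contradiction: a $u=-1$ arc at a time where $m<\bar m$ must terminate before $T$ (by Proposition \ref{prop1}) at a switch $-1\to +1$, requiring $m\geq \bar m$ at the endpoint; but $m$ is decreasing along that arc, a contradiction. For part (ii), the symmetric argument starting from $m_0>\bar m$ allows at most one switch, of type $-1\to +1$, at some $\bar t<T$ with $m(\bar t)\geq \bar m$; when this inequality is strict, no further switch is possible by the same exclusion rule, so $u=+1$ on $[\bar t,T]$.

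For (iii) and (iv), the key tool is the constancy of the Hamiltonian along extremals. On a singular arc one has $m=\bar m$, $\phi=0$ and $\lambda=\bar\lambda$, giving $H=\bar\lambda f_-(\bar m)=\gamma(\bar m)$. On the terminal arc $[\bar t',T]$ where $u=+1$ and $\lambda(T)=0$, $H(T)=g(m(T))$. Equating these forces $m(T)=g^{-1}(\gamma(\bar m))=\bar m_T$, and integrating $\dot m=f_1(m)$ gives the exit time $\bar t'=T-\int_{\bar m}^{\bar m_T}dm/f_1(m)=\bar T$, proving (iv).

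The main obstacle is the exclusion statement in (iii). The identity $g(m(T))=\gamma(\bar m)$ can hold only if $\gamma(\bar m)$ lies in the range $(0,g(0)]$ of the positive decreasing function $g$; when $f_-(\bar m)\geq 0$, $\gamma(\bar m)=-g(\bar m)f_-(\bar m)/f_+(\bar m)$ is non-positive, so no singular arc can enter an optimal extremal. Combined with (ii), the optimal control must then reduce to the two-piece bang-bang form announced, with $\bar t<T$ by Proposition \ref{prop1}.
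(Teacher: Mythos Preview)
Your treatment of (i)--(iii) matches the paper's: the sign of $\dot\phi=\psi/f_+$ at $\phi=0$ restricts the admissible switch types in $\{m<\bar m\}$ and $\{m>\bar m\}$, and in (iii) the Hamiltonian identity $H=\bar\lambda f_-(\bar m)=g(m(T))>0$ forces $f_-(\bar m)<0$ for a singular piece to survive, exactly as the paper argues.

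For (iv), your derivation of the exit time $\bar T$ via $H=\gamma(\bar m)=g(m(T))$ and backward integration of $\dot m=f_1(m)$ coincides with the paper's first step. But you then declare (iv) proved, and this is where a genuine gap lies. What you have shown is a \emph{necessary} condition: any extremal containing a singular piece must leave it at $\bar T$. The statement of (iv), however, asserts that the singular strategy \emph{is optimal} until $\bar T$; in particular one must rule out that, from $m=\bar m$ at a time $t<\bar T$, the bang arc $u=+1$ on $[t,T]$ does at least as well. The paper closes this by a direct cost comparison: writing $J_1(t)$ for the cost of leaving at $t$ with $u=+1$ and $J_s(t)$ for the cost of staying singular until $\bar T$, it shows that
\[
\delta(m(T)):=J_1(t)-J_s(t)=\int_{\bar m_T}^{m(T)}\frac{g(m)-\gamma(\bar m)}{f_1(m)}\,dm
\]
satisfies $\delta(\bar m_T)=\delta'(\bar m_T)=0$ and $\delta'(m)=0\Rightarrow\delta''(m)<0$, hence $\delta<0$ for $m(T)>\bar m_T$. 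The authors explicitly stress in the Remark following the proof that ``this point is not a consequence of Pontryagin's Principle''. You could alternatively close the gap within the PMP framework by checking that the pure $u=+1$ arc from $(\bar m,t)$ with $t<\bar T$ is \emph{not} an extremal (one finds $\phi(t)=\tfrac{f_+(\bar m)}{f_1(\bar m)}\bigl(g(m(T))-\gamma(\bar m)\bigr)<0$, contradicting $u=+1$), but your proposal does not include either argument.
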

\begin{proof}
Let us start by stating two properties that will be crucial for reducing the number of possible switching times in the optimal synthesis.
From Hypothesis \ref{H2} and equation (\ref{dotphi}), we can deduce that:
\begin{itemize}
\item When $\phi(m)=0 \mbox{ with } m<\bar m$ then $\dot\phi<0$. This
  implies that $\phi$ can change its sign only when
  decreasing. Therefore only a switching point from $u=1$ to $u=-1$ can be optimal in
  the domain $\{ m<\bar m\}$.
\item When $\phi(m)=0 \mbox{ with } m>\bar m$ then $ \dot\phi>0$. This implies that $\phi$
can change its sign only when increasing. Therefore, only a switching point from $u=-1$
to $u=1$ can be optimal in  the domain $\{ m>\bar m\}.$
\end{itemize}

Let us now prove (i). Take $m_0<\bar m$, and suppose that the control satisfies $u=-1$. It follows that the trajectory remains
in the domain $\{ m<\bar m\}$.
From Proposition \ref{prop1}, the trajectory necessarily has a switching point at time $t_c$ (otherwise, we would have $u=-1$ until the terminal time
$t=T$ and a contradiction) implying $\dot{\phi}(t_c)\geq 0$. On the other hand, we deduce from \eqref{dotphi}
that $\dot{\phi}(t_c)=\frac{\psi(m(t_c))}{f_+(m(t_c))}<0$ which is a contradiction. Hence, we must have $u=1$ in the domain  $\{ m<\bar m\}$.

The proof of (ii) is similar utilizing that in the domain $\{ m>\bar m\}$, any optimal trajectory has at most one switching point from $u=-1$ to $u=+1$.
It follows that only three cases may occur: either $u=1$ is optimal over $[0,T]$, or the trajectory reaches $m=\bar m$ at some instant $\bar t<T$, or
finally it has exactly one switching point in the domain $\{ m>\bar m\}$ from $u=-1$ to $u=+1$.

Let us prove (iii). If one has $u=+1$ at time zero, then the result is proved with $\bar t=0$. Suppose now that one has $u=-1$ at time zero.
We know  that if the trajectory switches at some time $\bar t\in [0,T]$ before reaching $m=\bar m$, then one has $u=1$ for $t>\bar t$ and the result is proved.
Suppose now that an optimal trajectory reaches the singular arc before $t=T$ and that one has $m(t)=\bar m$
on a time interval of non-null length.
Since the Hamiltonian is constant along any extremal trajectory, one must
have $H=\bar\lambda f_{-}(\bar m)$. Moreover, as the Hamiltonian at time $T$
is given by $H=g(m(T))$, one should have $\bar\lambda f_{-}(\bar
m)=g(m(T))> 0$. As $\bar\lambda<0$, we conclude that when $f_{-}(\bar
m)\geq 0$, this situation cannot occur. Hence, a singular arc is not optimal.

Finally, let us prove (iv) and suppose that $f_{-}(\bar m)<0$.
Accordingly to Propositions \ref{prop1} and \ref{prop2}, any optimal
trajectory is such that the corresponding optimal control satisfies $u=1$ in a left neighborhood of $t=T$.
Let us compute the last instant $\bar T<T$ (if it exists) until a singular arc is possible.
From the previous analysis, we necessarily have $u=1$ on $[\bar T,T]$.
This imposes (utilizing that the Hamiltonian is constant) the final state to be $\bar m_{T}=m(T)$ as a solution of
\begin{equation}
\label{barmT}
g(\bar m_{T})=\bar\lambda f_{-}(\bar m)=-\frac{g(\bar m)f_{-}(\bar
  m)}{f_{+}(\bar m)} =\gamma(\bar m),
\end{equation}
which is uniquely defined as $g$ is decreasing, $\lim_{m \rightarrow +\infty} g(m)=0$, and
$-\frac{f_{-}(\bar m)}{f_{+}(\bar m)}\in [0,1]$.
This also imposes that the switching time $\bar T$ can be determined integrating
backward the Cauchy problem
\[
\dot m=f_{1}(m), \quad m(T)=\bar m_{T},
\]
until $m(\bar T)=\bar m$, which amounts to write
\[
\bar T=T-\int_{\bar m}^{\bar m_{T}} \frac{dm}{f_{1}(m)}.
\]
which is exactly the exression \eqref{defbarT}.

We now show that any singular extremal trajectory leaving the singular arc $m=\bar m$ at a time
$t<\bar T$ is not optimal.
To do so, consider a trajectory $m(\cdot)$ leaving the singular arc at a time $t<\bar T$ (necessarily with $u=1$ until the terminal time $T$).
In particular, we have $m(T)>\bar m_{T}$.
Since the dynamics is $\dot m=f_{1}(m)$ with $u=1$, the
corresponding cost from time $t$ can be written as follows:
\[
J_{1}(t):=\int_{\bar m}^{m(T)} \frac{g(m)}{f_{1}(m)}dm=\int_{\bar m}^{\bar m_T} \frac{g(m)}{f_{1}(m)}+\int_{\bar m_T}^{m(T)} \frac{g(m)}{f_{1}(m)},
\]
to be compared with the cost $J_s(t)$ of the singular arc strategy from time $t$ ({\it{i.e.}} $u=\bar u$ over $[t,\bar T]$
and then $u=1$ over $[\bar T,T]$), which is equal to
\[
J_{s}(t):=-\frac{g(\bar m)f_{-}(\bar m)}{f_{+}(\bar m)}(\bar T-t)+\int_{\bar
  m}^{\bar m_{T}} \frac{g(m)}{f_{1}(m)}dm .
\]
Thanks to \eqref{defbarT} and using that $T-t=\int_{\bar m}^{m(T)}\frac{dm}{f_1(m)}$, we get
\[
\bar T-t=(T-t)-\int_{\bar m}^{\bar m_{T}} \frac{dm}{f_{1}(m)}=
\int_{\bar m}^{m(T)}\frac{dm}{f_{1}(m)}-\int_{\bar m}^{\bar m_{T}}
\frac{dm}{f_{1}(m)}= \int_{\bar m_{T}}^{m(T)}
\frac{dm}{f_{1}(m)} .
\]
The difference of costs $\delta(m(T))$ can be then written as:
\[
\delta(m(T)):=J_{1}(t)-J_{s}(t)=\int_{\bar m_{T}}^{m(T)} \left(g(m)+\frac{g(\bar
    m)f_{-}(\bar m)}{f_{+}(\bar m)}\right)\frac{dm}{f_{1}(m)}
\]
Let us now study the behavior of $\delta$ as a function of $m(T)$.
For convenience, we write $m$ in place of $m(T)$ and recall that $m\geq \bar m_T$ since $m(T) \geq \bar m_T$.
By a direct computation, one has:
\[
\begin{array}{lll}
\delta'(m) & = & \ds \frac{g(m)+\bar \alpha }{f_{1}(m)},\\[4mm] 
\delta''(m)& = & \ds \frac{g'(m)f_1(m)-(g(m)+\bar \alpha)f'_1(m)}{f_{1}^2(m)},
\end{array}
\]
where $\bar \alpha:=\frac{g(\bar m)f_{-}(\bar m)}{f_{+}(\bar m)}$. From this last expression, since $g'<0$, one has at each $m>0$:
\[
\delta'(m)=0 \; \Longrightarrow \, \delta''(m)<0.
\]
Now, it is to be observed that $\delta(\bar m_{T})=0$ and that $\delta'(\bar m_{T})=0$ (from \eqref{barmT})). The previous analysis then shows that
$\delta'<0$ on $(\bar m_T,+\infty)$. It follows that $\delta$ is decreasing over $[\bar m_T,+\infty)$. Hence, we obtain that
$\delta(m)<0$ for any $m>\bar m_T$.
As a conclusion, we have proved that $J_1(t)<J_s(t)$ for any time $t\in [0,\bar T)$,
thus any singular trajectory is such that it is optimal to
stay on the singular locus until $\bar T$ (and then use $u=1$ from
$\bar T$ to $T$) as was to be proved.
\end{proof}
\begin{remark}
In the proof of Proposition \ref{prop2} $\mathrm{(iv)}$, we have pointed out that any singular extremal trajectory is necessarily
optimal until the last possible instant $t=\bar T$.
It is worth to mention that this point is not a consequence of Pontryagin's Principle.
Although no saturation phenomenon of the singular control appears (indeed $u=\bar u$ is constant along the singular locus), singular trajectories
must leave the singular locus at the time $t=\bar T<T$.
\end{remark}

In the sequel, the notation $u[t,m]$
stands for a (non-autonomous) feedback control depending on both
current time $t$ and current state $m$, whereas $u(\cdot)$ denotes
a control function in open loop ({\it{i.e.}} a function of time only chosen for a
given initial condition).
Let us consider the following two sub-domains (that are not disjoined)
\[
{\cal D}_{-} := \{ (t,m) \in [0,T]\times[0,\bar m]\} , \quad
{\cal D}_{+} := \{ (t,m) \in [0,T]\times[\bar m,+\infty)\}.
\]
From Proposition \ref{prop2}, we obtain the following properties about the
optimal control on these two sub-domains. 
\begin{corollary}
\label{cor3}
Under Hypotheses \ref{H1} and \ref{H2}, one has the following properties:
\begin{enumerate}
\item[\emph{(i)}.] If $f_{-}(\bar m)\geq 0$ (where $\bar T$ is defined
in (\ref{defbarT}) when $f_{-}(\bar m)<0$), then the control $u[t,m]=1$ is optimal at any $(t,x)\in {\cal D}_{-}$.
\item[\emph{(ii)}.] If $f_{-}(\bar m)< 0$ and $\bar T\leq 0$ where $\bar T$ is defined
in (\ref{defbarT}), then the control $u[t,m]=1$ is optimal at any $(t,x)\in {\cal D}_{-}$.
\item[\emph{(iii)}.] If $f_{-}(\bar m)< 0$ and $\bar T \in (0,T)$, then the
control
\[
u[t,m]=\left|\begin{array}{ll}
1 & \mbox{if } m<\bar m \mbox{ or } t\geq \bar T,\\
\bar u & \mbox{if } m=\bar m \mbox{ and } t<\bar T,
\end{array}\right.
\]
is optimal at any $(t,x)\in {\cal D}_{-}$.
\item[\emph{(iv)}.] The set ${\cal D}_{+}$ is optimally invariant {\it{i.e}}
from any initial condition $(t,m) \in {\cal D}_{+}$, an optimal trajectory stays in
${\cal D}_{+}$ for any future time.
\end{enumerate}
\end{corollary}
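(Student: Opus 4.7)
All four assertions follow from Proposition \ref{prop2} combined with Bellman's dynamic-programming principle; the delicate point is that Proposition \ref{prop2} yields open-loop statements for strictly sub- or super-critical initial masses, whereas the corollary asks for a feedback description that must also cover the boundary value $m = \bar m$.

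For items (i) and (ii), on the open set $\{m_0 < \bar m\}$ the conclusion $u[t_0,m_0]=1$ is exactly Proposition \ref{prop2}(i). For $m_0 = \bar m$ I would argue by contradiction: suppose an optimal control uses $u=-1$ on some $[t_0,\bar t]$ with $\bar t>t_0$. Since $\dot m = -f_{2}(m) < 0$ for $m>0$, the trajectory satisfies $m(t') < \bar m$ for every $t'\in(t_0,\bar t]$. Because the optimal control must be $u=+1$ in a left neighborhood of $T$ (Proposition \ref{prop1}), this control has to switch back from $-1$ to $+1$ at some later instant, necessarily in the region $\{m<\bar m\}$; but the sign analysis of $\dot\phi$ recalled at the start of the proof of Proposition \ref{prop2} excludes precisely such switches. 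The remaining possibility, a singular phase at $m=\bar m$, is ruled out by Proposition \ref{prop2}(iii) in case (i) and by the assumption $\bar T\leq 0$ (interpreted through the construction of $\bar T$ in the proof of Proposition \ref{prop2}(iv)) in case (ii); hence $u=1$ is optimal. Item (iii) is handled the same way on $\{m<\bar m\}$ and at $(t_0,\bar m)$ for $t_0\geq \bar T$, while for $t_0<\bar T$ the optimality of the singular control $\bar u$ at $(t_0,\bar m)$ is exactly Proposition \ref{prop2}(iv).

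For (iv), let $(t_0,m_0)\in\mathcal{D}_+$. If $m_0>\bar m$ I would enumerate the three cases of Proposition \ref{prop2}(ii): (a) $u=1$ throughout gives $\dot m = f_{1}(m)>0$, so $m$ remains strictly above $\bar m$; (b) $u=-1$ until a time $\bar t$ with $m(\bar t)>\bar m$, followed by $u=1$, keeps $m$ above $\bar m$ by monotonicity on each phase; (c) $u=-1$ drives the trajectory down to $m(\bar t)=\bar m$, after which items (i)--(iii) applied at $(\bar t,\bar m)$ yield either $u=1$ (state grows back above $\bar m$) or the singular arc $u=\bar u$ (state stays at $\bar m$). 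If $m_0=\bar m$, items (i)--(iii) provide the continuation directly. In every case the trajectory remains in $\{m\geq\bar m\}$, proving optimal invariance of $\mathcal{D}_+$. The main obstacle is the coherent treatment of the threshold $m=\bar m$ across all four items, and in particular closing the seemingly circular case (c) of (iv) by reusing items (i)--(iii) at $(\bar t,\bar m)$ rather than (iv) itself.
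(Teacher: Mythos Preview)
Your proposal is correct and follows essentially the same approach as the paper, which simply invokes the sign analysis of $\dot\phi$ from the proof of Proposition~\ref{prop2} to exclude $u=-1$ in ${\cal D}_{-}$, rules out the singular arc via Proposition~\ref{prop2}(iii)--(iv), and for~(iv) says only ``we utilize the same argument as for proving (i) and (ii)''. Your version is more careful in two respects: you treat the boundary case $m_0=\bar m$ explicitly (the paper glosses over it), and for~(iv) you replace the paper's one-line appeal by an explicit enumeration of the three alternatives of Proposition~\ref{prop2}(ii), closing the case $m(\bar t)=\bar m$ via the already-established items (i)--(iii) and the dynamic-programming principle; this is more rigorous than the paper's proof, not different in substance.
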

\begin{proof}
We have seen that one cannot have $u=-1$ in the domain $\mathcal{D}_-$, otherwise an optimal control cannot be equal to one at the terminal time.
Moreover in the two cases $f_{-}(\bar m)\geq 0$ or $f_{-}(\bar m)< 0$ together with $\bar T\leq 0$, the previous proposition implies that no singular
arc occurs. This proves (i)-(ii). If $f_{-}(\bar m)< 0$ and $\bar T \in (0,T)$, we have seen that singular arcs are optimal
until the terminal time $t=\bar T$. This proves (iii). 
For proving (iv), we utilize the same argument as for proving (i) and (ii).
\end{proof}
\section{Switching locus and full synthesis}\label{switchcurve-sec}
In this section, we shall provide an optimal synthesis of the problem and we will show in particular that it can exhibit a switching curve depending on the parameter values.
\subsection{Study of the switching locus in $\mathcal{D}_+$}
We start by studying if optimal trajectories can have a switching point.
Accordingly to Proposition \ref{prop2}, this may only occur in the set ${\cal D}_{+}$ with a switching point from
$u=-1$ to $u=1$.  We shall then investigate the locus where switching points occur.
To do so, in the case where $f_-(\bar m)<0$, consider a parameterized curve $\mathcal{C}$ (possibly empty) contained in $\mathcal{D}_+$ defined by
\be{\label{switching-loc}}
{\cal C}:=\left\{ (\tilde T(\tilde m),\tilde m) \; \vert \; \tilde
m\geq \bar m \mbox{ and } \tilde T(\tilde m)>0 \right\},
\ee
where $\tilde T:[\bar m,+\infty) \rightarrow \Rset$ is the function defined by
\be{\label{switch-time}}
\tilde T (\tilde m):=T-\int_{\tilde m}^{g^{-1}(\gamma(\tilde m))}
\frac{dm}{f_{1}(m)} , \quad \tilde m \geq \bar m.
\ee
The following proposition gives existence and
characterization of this locus contained in ${\cal D}_{+}$.
\begin{prop}
\label{propDplus}
Assume that Hypotheses \ref{H1} and \ref{H2} are fulfilled.
\begin{enumerate}
\item[\emph{(i)}.] If $f_{-}(\bar m)\geq 0$, then an optimal feedback control is $u[t,m]=1$ for $(t,m)\in {\cal D}_{+}$.
\item[\emph{(ii)}.]  If  $f_{-}(\bar m)< 0$, then:
\begin{itemize}
\item If ${\cal C}$ is empty, an optimal feedback control is $u[t,m]=1$ for $(t,m)\in {\cal D}_{+}$.
\item If ${\cal C}$ is non-empty, consider the domain
\be{\label{set-W}}
{\cal W}:=\left\{ (t,m) \in [0,T)\times]\bar m,+\infty) \; \vert \;
t < \tilde T(m)\right\} .
\ee
Then the feedback control in $[0,T] \times (0,+\infty)$
\be{\label{feed-opti}}
u[t,m]=\left|\begin{array}{rl}
-1 & \mbox{if } (t,m) \in {\cal W}, \\
\bar u & \mbox{if } m=\bar m \mbox{ and } t<\bar T,\\
1 & \mbox{otherwise}.
\end{array}\right.
\ee
is optimal. 
 Furthermore, the set ${\cal C}$ is tangent to the trajectory that
leaves the singular arc at $(\bar T,\bar m)$ with the control $u=1$.
\end{itemize}
\end{enumerate}
\end{prop}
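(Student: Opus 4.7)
The plan is to identify the unique candidate switching curve in $\mathcal{D}_+$, then to verify case by case that the claimed feedback is indeed optimal. By Proposition \ref{prop2}(ii), any optimal trajectory starting in $\mathcal{D}_+$ is either $u\equiv 1$, reaches the singular locus $m=\bar m$ before $T$, or exhibits exactly one switch from $u=-1$ to $u=+1$ at some state $\tilde m > \bar m$. For the third type, the switching condition $\phi=0$ forces $\lambda(\tilde t)=-g(\tilde m)/f_+(\tilde m)$, and constancy of the Hamiltonian along the extremal together with $\lambda(T)=0$ give $H = \gamma(\tilde m) = g(m(T))$. Since $g$ is positive, strictly decreasing and tends to $0$, this uniquely determines $m(T)=g^{-1}(\gamma(\tilde m))$ whenever $\gamma(\tilde m)>0$, i.e.\ $f_-(\tilde m)<0$, and integrating $\dot m=f_1(m)$ backward from $m(T)$ to $\tilde m$ fixes the switching time at $\tilde t=\tilde T(\tilde m)$. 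This identifies the admissible switching locus as $\mathcal{C}$.

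For part (i), since $f_-$ is decreasing, $f_-(\bar m)\geq 0$ implies $\gamma(\bar m)\leq 0$; in particular no singular arc is optimal (Proposition \ref{prop2}(iii)). It remains to rule out a switching point in the open set $\{m>\bar m\}$. Parametrizing the cost $J$ of the strategy ``$u=-1$ on $[0,\bar t]$ then $u=+1$ on $[\bar t,T]$'' by the switching state $\tilde m \in [\bar m,m_0]$ and differentiating, one finds
\[
\frac{dJ}{d\tilde m} = \frac{g(\tilde m)+g(m_T(\tilde m))}{f_2(\tilde m)} - \frac{g(\tilde m)-g(m_T(\tilde m))}{f_1(\tilde m)},
\]
where $m_T(\tilde m)$ is the implicit final state; the critical points of $J$ coincide precisely with $\mathcal{C}$. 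A sign analysis using $f_-(\bar m)\geq 0$ shows that $J$ has no interior maximum on $[\bar m,m_0]$, so $J$ is maximized at $\tilde m=m_0$, i.e.\ with no switch, hence $u\equiv 1$ is optimal in $\mathcal{D}_+$.

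For part (ii), when $\mathcal{C}$ is empty no admissible switch fits in $[0,T]$ and by the same reasoning no singular strategy is admissible; the only remaining candidate from Proposition \ref{prop2}(ii) is $u\equiv 1$. When $\mathcal{C}$ is non-empty, I verify that the feedback (\ref{feed-opti}) is well-defined and extremal: from $(t_0,m_0)\in \mathcal{W}$ the control $u=-1$ strictly decreases $m$, so by a continuity argument the trajectory meets $\mathcal{C}$ at some $(\tilde t,\tilde m)$ with $\tilde t<T$; the feedback then switches to $u=+1$ and the trajectory reaches $m(T)=g^{-1}(\gamma(\tilde m))$ exactly at time $T$. Above $\mathcal{C}$ the feedback stays at $u=+1$ and the trajectory remains in that region; the boundary case $m=\bar m$ with $t<\bar T$ coincides with the singular arc treated in Corollary \ref{cor3}(iii). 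Uniqueness of the extremal switching locus derived in the first paragraph then identifies this feedback with the unique optimal strategy class.

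The tangency claim is the last point. A direct computation from the definitions shows
\[
\gamma'(m) = -\psi(m)/f_+(m)^2,
\]
and differentiating $\tilde T(\tilde m)$ yields
\[
\tilde T'(\tilde m) = \frac{1}{f_1(\tilde m)} - \frac{\gamma'(\tilde m)}{g'(g^{-1}(\gamma(\tilde m)))\,f_1(g^{-1}(\gamma(\tilde m)))}.
\]
At $\tilde m=\bar m$, Hypothesis \ref{H2} gives $\psi(\bar m)=0$, hence $\gamma'(\bar m)=0$, so $\tilde T'(\bar m)=1/f_1(\bar m)$, which is exactly the slope of the trajectory leaving the singular arc at $(\bar T,\bar m)$ under $u=+1$. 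The main obstacle I expect is the cost comparison in part (i): the PMP alone does not exclude a candidate switching once $f_-$ becomes negative above $\bar m$, and the explicit evaluation of $dJ/d\tilde m$ on $[\bar m,m_0]$ together with the sign condition at $\tilde m=m_0$ is where the real work happens.
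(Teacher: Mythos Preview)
Your overall structure is sound and, for parts (ii) and the tangency claim, essentially matches the paper (backward integration along $u=1$, the relation $g(m_T)=\gamma(\tilde m)$ at a switch, and $\tilde T'(\bar m)=1/f_1(\bar m)$ via $\gamma'(\bar m)=0$). One minor omission in (ii): from $(t_0,m_0)\in\mathcal{W}$ the $u=-1$ trajectory may hit the singular locus $m=\bar m$ (with $t<\bar T$) rather than $\mathcal{C}$; the feedback \eqref{feed-opti} covers both exits, but your verification should say so.

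For part (i) you take a genuinely different route than the paper. The paper never parametrizes the cost; it writes the switching function along the backward $u=1$ arc as
\[
\phi(m,\lambda)=\frac{f_+(m)}{f_1(m)}\bigl(g(m_T)-\gamma(m)\bigr),
\]
and then observes that $\gamma'(m)=-\psi(m)/f_+(m)^2$, so by Hypothesis~\ref{H2} the function $\gamma$ is decreasing on $[\bar m,+\infty)$; together with $\gamma(\bar m)\le 0$ (from $f_-(\bar m)\ge 0$) this gives $\gamma(m)\le 0<g(m_T)$ for all $m\ge\bar m$, hence $\phi>0$ throughout and no switch. Your cost-derivative approach is correct in principle and your formula for $dJ/d\tilde m$ is right (its zeros are exactly $g(m_T)=\gamma(\tilde m)$, as you note), but the ``sign analysis using $f_-(\bar m)\ge 0$'' that you flag as ``where the real work happens'' is not carried out, and cannot be done from $f_-(\bar m)\ge 0$ alone: since $f_-$ is decreasing, $f_-(\tilde m)$ may well be negative for $\tilde m>\bar m$, so $\gamma(\tilde m)$ could a priori be positive. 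The missing ingredient is precisely the monotonicity of $\gamma$ on $[\bar m,+\infty)$ via $\gamma'=-\psi/f_+^2$ and Hypothesis~\ref{H2}, which you compute later for the tangency but do not invoke here. Once you use it, $\gamma(\tilde m)\le\gamma(\bar m)\le 0<g(m_T(\tilde m))$ gives $dJ/d\tilde m>0$ on $(\bar m,m_0)$, so the maximum is at $\tilde m=m_0$ and your argument closes. (The remark ``since $f_-$ is decreasing'' in your first sentence of (i) is a red herring; $\gamma(\bar m)\le 0$ follows directly from $f_-(\bar m)\ge 0$.)
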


\begin{proof}

Suppose that $f_{-}(\bar m)\geq 0$ and let us prove (i). We only have to show that any optimal control satisfies
$u=1$ in $\mathcal{D}_+$. In this case, we know that no singular arc occurs,
therefore it is enough to exclude switching points from $u=-1$ to $u=+1$ in $\cal D_+$.
Also, since one has $u=1$ in a neighborhood of $t=T$, it is enough to consider terminal states $m_{T}\geq \bar m$.
%
%
By integrating
backward the dynamics with the control $u=1$, one has
$H=g(m_{T})=g(m(t))+\lambda(t) f_{1}(m(t))$ for $t<T$ as long as the switching
function
\be{\label{phi-tmp}}
\begin{array}{lll}
\phi(m,\lambda) & = & g(m)+\lambda f_{+}(m)= g(m)+(g(m_{T})-g(m))\frac{f_{+}(m)}{f_{1}(m)}\\
 & = & \ds \frac{f_{+}(m)}{f_{1}(m)}\left(g(m_{T})-\gamma(m)\right),
\end{array}
\ee
remains positive.  As $f_{-}(m)\geq 0$, one has $\gamma(\bar m)\leq
0$.
Notice also that for $m \geq 0$, one has
\begin{equation}{\label{myfun-gamma}}
\gamma'(m)=-\frac{\psi(m)}{f_+(m)^2},
\end{equation}
so that $\gamma$ is increasing over $[0,\bar m]$ and decreasing over $[\bar m,+\infty)$.
Since $\gamma$ is decreasing over $[\bar m,+\infty)$
we deduce that $\gamma(m(t))\leq 0$ for any time $t\in [0,T]$. Consequently, $\phi$ cannot change its
sign. Therefore the control $u=1$ is optimal at any time as was to be proved.

Suppose now that $f_{-}(m)<0$ and let us prove (ii). Again, we consider terminal states $m_{T}\geq \bar m$ and we consider the dynamics
with $u=1$ backward in time.
Note that when $m_T=\bar m_T$, then one has $g(\bar m_T)=\gamma(\bar m)$ by conservation of the Hamiltonian.
Consider now an initial state $m_T>\bar m_T$ and the system backward in time with $u=1$.
If an optimal control is such that $u=1$ until reaching the singular arc, we deduce (Thanks to \eqref{phi-tmp}) that
$$
g(m_T)-\gamma(\bar m) < g(\bar m_T)-\gamma(\bar m)=0,
$$
(since $g$ is decreasing). Thus, the switching function is negative when $\bar m$ is reached backward in time by the  trajectory.
By the mean value Theorem, we conclude that there exists a switching point
that necessarily occurs at some value $\tilde m> \bar m$ such
that $\gamma(\tilde m)=g(m_{T})$, and accordingly to
Proposition \ref{prop2} this switching point (from $u=-1$ to $u=1$) is unique.
From the monotonicity of $\gamma$ over $[\bar m,+\infty)$, for each $m_T>\bar m$, $\tilde m$ is
uniquely defined by $\tilde m=\gamma^{-1}(g(m_{T}))$, or reciprocally, for any $\tilde m\geq \bar
m$, $m_{T}$ is uniquely defined as a function of
$\tilde m$: $m_{T}(\tilde m)=g^{-1}(\gamma(\tilde m))$ (as $g$ is also a
decreasing invertible function), with
\begin{equation}
\label{mTprime}
m_{T}'(\tilde m)=\frac{\gamma'(\tilde m)}{g'(m_{T}(\tilde m))}\geq 0 .
\end{equation}
Then, the corresponding switching time $\tilde T(\tilde m)$ satisfies
\begin{equation}
\label{Ttilde}
T-\tilde T(\tilde m) = \int_{\tilde m}^{m_{T}(\tilde m)} \frac{dm}{f_{1}(m)} .
\end{equation}
If $\tilde T(\tilde m)\leq 0$ then no switch occurs at $\tilde m$
{\it{i.e.}} the constant control $u=1$ is optimal from $0$ to $T$. It follows that if $\mathcal{C}$ is empty, then $u=1$ is
optimal in $\mathcal{D}_+$ as was to be proved.

When switching points occur, that is, when $\mathcal{C}$ is non-empty, the previous analysis shows that switching points indeed occur
on the curve of $\mathcal{D}_+$ given by \eqref{switching-loc} and the corresponding switching times are given by
\eqref{switch-time} as was to be proved. The optimality of the feedback control \eqref{feed-opti} follows by noting that in $\mathcal{D}_+$,
optimal trajectories have at most one switching point from $u=-1$ to $u=+1$ or from $u=-1$ to $\bar u$.

Finally, the derivative of $\tilde T$ with respect to $\tilde m$ can be
determined from expressions (\ref{Ttilde}) and (\ref{mTprime}) as
\[
\tilde T'(\tilde m)=\frac{1}{f_{1}(\tilde m)}-\frac{m_{T}'(\tilde
  m)}{f_{1}(m_{T}(\tilde m))}=\frac{1}{f_{1}(\tilde
  m)}-\frac{\gamma'(\tilde m)}{g'(m_{T}(\tilde m))f_{1}(m_{T}(\tilde
  m))} .
\]
At $\tilde m=\bar m$, one has $\tilde T(\bar m)=\bar T$ and
$\gamma'(\bar m)=0$ (since $\psi(\bar m)=0)$, which gives $\tilde T'(\bar
m)=1/f_{1}(\bar m) >0$.
Thus, the parameterized curve ${\cal C}$ is 
indeed tangent to the trajectory that leaves the
singular arc with $u=1$ at $(\bar T,\bar m)$.
\end{proof}

\begin{remark}
When $f_{-}(\bar m)<0$ and $\bar T>0$ (where $\bar T$ is defined in
(\ref{defbarT})), the point $(\bar T,\bar m)$ belongs to the curve ${\cal C}$ which is
then non-empty. This curve could be a set of disjoint curves in $[0,T] \times (0,+\infty)$ (for
instance if the function $\tilde T$ has several changes of sign).
However, in the examples we met, it is always a single curve (bounded or not), see Section \ref{appli-sec}.
Notice also that the map $\tilde m \mapsto \tilde T(\tilde m)$ has no a priori reason to be
monotonic, as one can see in the second example in Section \ref{appli-sec}.
\end{remark}

\subsection{Dispersal curve}
In the sequel, a {\it{switching locus}} is a set of points where optimal trajectories
cross this set by switching from $u=-1$ to $u=+1$. Moreover, the control remains constant equal to one after the switching point and
the corresponding trajectory does not reach the singular arc.
On the other hand, a {\it{dispersal curve}} will stand for a set of points from which
there are exactly two optimal trajectories : in our setting, either the optimal control is $u=+1$ until the terminal time or the optimal control is
$u=-1$ in the set $\mathcal{W}$ until the time where the trajectory reaches 
either the singular locus or the switching locus (both strategies having the same optimal cost).

When the set ${\cal C}$ is non-empty (under the condition $f_{-}(\bar m)<0$), we introduce the following partition:
\[
{\cal C} = {\cal C}_{s} \sqcup {\cal C}_{s}
\]
with
\[
{\cal C}_{s}:=\left\{ (t,m)  \in {\cal C} \; ; \;
1+\tilde T'(m)f_{2}(m)>0 \right\} , \quad
{\cal C}_{d}:=\left\{ (t,m) \in {\cal C}\; ; \;
1+\tilde T'(m)f_{2}(m) \leq 0 \right\} .
\]
One can then characterize optimal trajectories on these two sets as follows.
\begin{corollary}
\label{corol}
Assume that Hypotheses \ref{H1}, \ref{H2} are fulfilled with
$f_{-}(\bar m)<0$ and that ${\cal C} \neq \emptyset$ (where ${\cal C}$ is
defined in Proposition \ref{prop2}). One has the following properties.
\begin{itemize}
\item The set ${\cal C}_{s}$ is not
  reduced the singleton $\{(\bar T,\bar m)\}$ and it is a switching locus.
\item The set ${\cal C}_{d}$ (when it is non-empty) is a dispersion locus
{\it{i.e.}} from every state in ${\cal C}_{d}$ the two trajectories
\begin{enumerate}
\item with $u=1$ up to the terminal time,
\item with $u=-1$ up to reaching the singular arc $m=\bar m$ or the
  set ${\cal C}_{s}$,
\end{enumerate}
are both optimal.
\end{itemize}
\end{corollary}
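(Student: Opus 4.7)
The proof naturally organizes around one geometric computation. Set $h(t,m):=t-\tilde T(m)$, so that by \eqref{set-W} one has $\mathcal{W}=\{h<0,\,m>\bar m\}$ and $\mathcal{C}$ lies in $\{h=0\}\cap\{m\geq\bar m\}$. Along any trajectory driven by $u=-1$, $\dot m=-f_2(m)$, so
\[
\frac{d}{dt}h(t,m(t))=1+f_2(m)\tilde T'(m),
\]
and the partition $\mathcal{C}=\mathcal{C}_s\sqcup\mathcal{C}_d$ is exactly the partition according to whether the $u=-1$ flow crosses $\mathcal{C}$ transversally out of $\mathcal{W}$ (on $\mathcal{C}_s$) or stays in $\overline{\mathcal{W}}$ (on $\mathcal{C}_d$). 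This is the lens through which I would treat both assertions.

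For the first assertion, Proposition \ref{propDplus} already furnishes $(\bar T,\bar m)\in\mathcal{C}$ together with $\tilde T'(\bar m)=1/f_1(\bar m)>0$; combined with $f_2(\bar m)>0$ (Hypothesis \ref{H1}, since $\bar m>0$), this gives $1+f_2(\bar m)\tilde T'(\bar m)>0$, placing $(\bar T,\bar m)\in\mathcal{C}_s$. By continuity of $m\mapsto 1+f_2(m)\tilde T'(m)$, an open sub-arc of $\mathcal{C}$ around $(\bar T,\bar m)$ lies in $\mathcal{C}_s$, so $\mathcal{C}_s$ is not reduced to a point. The switching-locus property then follows from the geometric setup: at any $(t_0,m_0)\in\mathcal{C}_s$, the $u=-1$ trajectory approaches $(t_0,m_0)$ through $\mathcal{W}$ backward in time while the $u=+1$ trajectory leaves into $\{h>0\}$ forward in time; Proposition \ref{propDplus}(ii) guarantees the corresponding feedback is optimal on each side, so the concatenated trajectory through $(t_0,m_0)$ is an optimal extremal with a genuine switch from $u=-1$ to $u=+1$ at that point.

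For the second assertion, fix $(t_0,m_0)\in\mathcal{C}_d$ and consider two candidate controls. Strategy A is $u\equiv+1$ on $[t_0,T]$, whose optimality is immediate from Proposition \ref{propDplus}(ii) and whose cost equals the value function $V(t_0,m_0)$. Strategy B applies $u=-1$ initially; the sign condition $1+f_2(m)\tilde T'(m)\leq 0$ on $\mathcal{C}_d$ and $\dot m=-f_2(m)<0$ keep the resulting trajectory in $\overline{\mathcal{W}}$ with $m$ strictly decreasing, so it must meet either $\{m=\bar m\}$ or $\mathcal{C}_s$ within $[t_0,T]$; from that meeting point I concatenate with the optimal feedback provided by Corollary \ref{cor3} and Proposition \ref{propDplus}(ii). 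To certify $J(t_0,m_0,u_B)=V(t_0,m_0)$, I would use the continuity of the value function: inside $\mathcal{W}$ the value coincides with the $u=-1$-first cost $V^-$, and in the complement of $\overline{\mathcal{W}}$ with the $u=+1$ cost $V^+$, so taking limits to $(t_0,m_0)$ from both sides forces $V^-(t_0,m_0)=V^+(t_0,m_0)=V(t_0,m_0)$. Since the $u=-1$ segment of Strategy B has strictly positive duration on $\mathcal{C}_d$, the two strategies are genuinely different, which is exactly the dispersion property.

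The main obstacle is the continuity of $V$ across $\mathcal{C}$, a classical but non-trivial property for this autonomous Bolza problem with compact control set; I would either invoke it from standard references (in the spirit of \cite{LeeMarkus1967}) or, as a more elementary alternative, compute $J(t_0,m_0,u_A)$ and $J(t_0,m_0,u_B)$ as explicit integrals and verify their equality directly, using $t_0=\tilde T(m_0)$, $g(m(T))=\gamma(m_0)$, the singular-arc cost expression from Proposition \ref{prop2}(iv) and conservation of the Hamiltonian along each extremal. The direct-computation route is feasible but bookkeeping-heavy, so the continuity/gluing approach is preferable.
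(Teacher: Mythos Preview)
Your proposal is correct and follows essentially the same approach as the paper. Both arguments reduce to the transversality computation $\tfrac{d}{dt}\big(t-\tilde T(m(t))\big)=1+f_{2}(m)\tilde T'(m)$ along $u=-1$ (the paper phrases this as the sign of the scalar product between an outward normal to $\mathcal{W}$ and the velocity $v_{-1}$), and both appeal to continuity of the value function to conclude equality of costs on $\mathcal{C}_{d}$; your proof that $\mathcal{C}_{s}$ is not a singleton via continuity of $m\mapsto 1+f_{2}(m)\tilde T'(m)$ is marginally more direct than the paper's argument through continuous dependence on initial conditions, but the substance is the same.
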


\begin{proof}
The domain ${\cal W}$ (when it is not empty) is exactly the set of points $(t,m)\in \mathcal{D_+}$ for which the optimal
control satisfies $u=-1$ (see Proposition \ref{prop2}).
From such a state, the optimal trajectory has to leave the domain
${\cal W}$ (as $\dot m$ is bounded from above by $-f_{2}(\bar
m)<0$ in this set) reaching either the singular arc or
the set ${\cal C}$. At some point $(t,m)$ in ${\cal C}$, an outward normal $n$ to ${\cal W}$  is then given by
\[
n(t,m)=\left(\begin{array}{c} 1\\ -\tilde T'(m)
\end{array}\right),
\]
and the velocity vectors $v_{-1}$, $v_{1}$ for the control $u=-1$ and $u=1$ respectively
are
\[
v_{-1}(t,m)=\left(\begin{array}{c} 1\\ -f_{2}(m)
\end{array}\right) , \quad
v_{1}(t,m)=\left(\begin{array}{c} 1\\ f_{1}(m)
\end{array}\right) .
\]
Notice that by construction of the set ${\cal C}$, the velocity vector
$v_{1}$ points outward of ${\cal W}$ at any point $(t,m) \in {\cal C}$.
Hence, the velocity vector $v_{-1}$ points outward when the scalar product
$n \cdot v_{-1}$ is positive, that is when $(t,m)$ belongs to ${\cal
  C}_{s}$.

We consider now optimal trajectories that reach ${\cal C}$ from ${\cal W}$
and distinguish two cases.
\begin{enumerate}
\item At states in ${\cal C}_{s}$, the velocity vectors $v_{-1}$,
  $v_{1}$ both point outward of the set ${\cal W}$.
Therefore an optimal trajectory reaching ${\cal C}_{s}$ with $u=-1$
leaves it with $u=1$. Then, accordingly to Proposition \ref{prop1}, the
optimal control stays equal to $1$ up to the terminal time.
\item At states in ${\cal C}_{d}$, $v_{-1}$ points inward of ${\cal
  W}$ while $v_{1}$ points outward. Therefore an optimal
trajectory cannot reach a point located on ${\cal C}_{d}$. From states in  ${\cal
  C}_{d}$, there are thus two extremal trajectories: one
with $u=1$ up to the terminal time, and another one with $u=-1$ up to
the singular arc or to the curve ${\cal C}_{s}$ (accordingly to
Proposition \ref{prop1} and \ref{prop2}) and then $u=1$ up to the terminal time.  As the value function of
a Lagrange problem with smooth data is everywhere Lipschitz continuous (see for
instance \cite{BardiCapuzzo1997}), and that $u=-1$ and $u=+1$ are optimal respectively
inside and outside ${\cal W}$, we deduce that these two extremal trajectories
should have the same cost {\it{i.e.}} are both optimal.
\end{enumerate}
Finally, let us show that ${\cal C}_{s}$ is not
  reduced a singleton. The state $(\bar m,\bar T)$ belongs to ${\cal C}$
  (as it is indeed a point where the switching function vanishes) but
it also belongs to the singular locus $m=\bar m$.
Therefore, there exists a trajectory with $u=-1$ that is
crosses ${\cal C}$ transversely at this point. 
By continuity of the
solutions of the system with $u=-1$ w.r.t. the initial
condition, we deduce that there exist locally other trajectories that
cross the non-empty curve ${\cal C}$ transversely with the control $u=-1$.
This proves that ${\cal C}_{s}$ is not reduced to a singleton.
\end{proof}

Figure \ref{figpartitionC} illustrates the two kind of points that
can belong to the set ${\cal C}$.
\begin{figure}[h]
\begin{center}
\includegraphics[width=4cm]{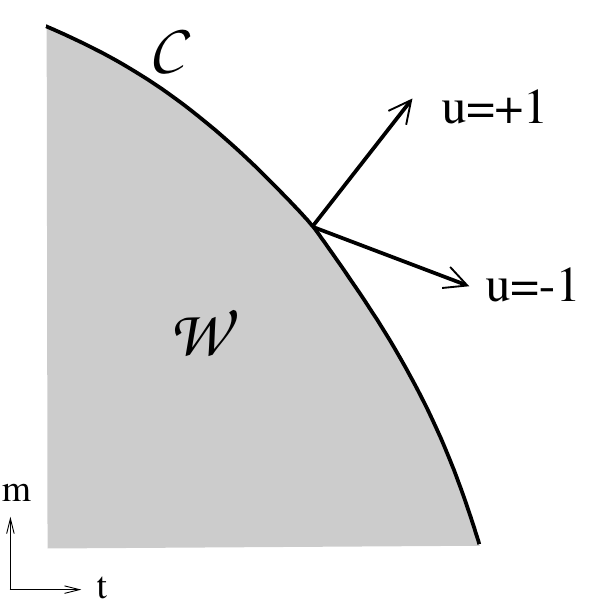}
\hspace{15mm}
\includegraphics[width=4cm]{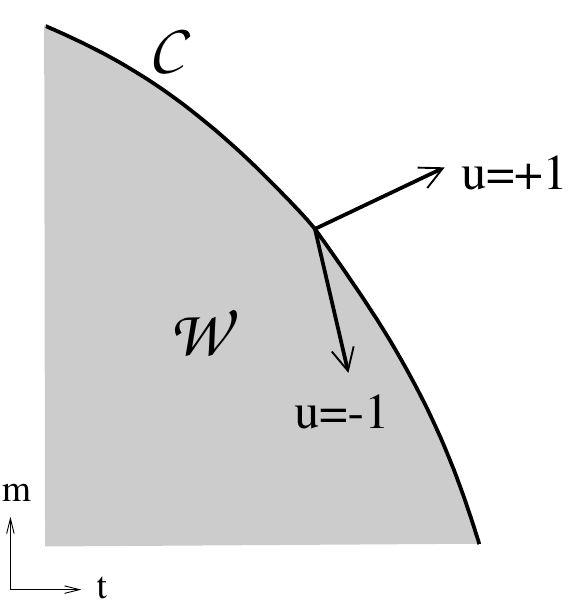}
\caption{Switching point (left) versus dispersion point (right) on the
  set ${\cal C}$.}
\label{figpartitionC}
\end{center}
\end{figure}
\subsection{Full synthesis}
We conclude this section by summarizing the results of Corollary \ref{cor3} and Proposition \ref{propDplus}
that give the optimal synthesis of the problem in the whole domain
$[0,T]\times[0,+\infty)$.
\begin{theorem}{\label{theo-conclusion}}
Assume that Hypotheses \ref{H1} and \ref{H2} are fulfilled.
\begin{enumerate}
\item[\emph{(i)}.] If $f_{-}(\bar m)\geq 0$ or $f_{-}(\bar m)< 0$ and the set $\mathcal{C}$ is empty, then, an optimal feedback control
in $[0,T] \times (0,+\infty)$ is given by $\mathrm{(}$recall \eqref{set-W}$\mathrm{)}$
$$u[t,m]=1.$$
\item[\emph{(ii)}.] If $f_{-}(\bar m)< 0$ and the set $\mathcal{C}$ non-empty, then, an optimal feedback control in $[0,T] \times (0,+\infty)$ is given by
\be{}
u[t,m]=\left|\begin{array}{rl}
-1 & \mbox{if } (t,m) \in {\cal W}, \\
\bar u & \mbox{if } m=\bar m \mbox{ and } t<\bar T,\\
1 & \mbox{otherwise}.
\end{array}\right.
\ee
\end{enumerate}
\end{theorem}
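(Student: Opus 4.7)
The plan is to obtain the full synthesis by gluing together the partial results already established on the two sub-domains $\mathcal{D}_-$ and $\mathcal{D}_+$ which together cover $[0,T]\times(0,+\infty)$ (overlapping on the singular locus $\{m=\bar m\}$). Corollary \ref{cor3} gives the optimal feedback on $\mathcal{D}_-$ and Proposition \ref{propDplus} gives it on $\mathcal{D}_+$; all that remains is to check that the two descriptions match on $\{m=\bar m\}$ and that the trichotomy of cases in the corollary matches that of the proposition.

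For case (i), I would first treat the sub-case $f_-(\bar m)\geq 0$: Corollary \ref{cor3}(i) gives $u[t,m]=1$ on $\mathcal{D}_-$, while Proposition \ref{propDplus}(i) gives $u[t,m]=1$ on $\mathcal{D}_+$, and the two agree on $\{m=\bar m\}$. The second sub-case is $f_-(\bar m)<0$ with $\mathcal{C}=\emptyset$. Here I would observe that if $\bar T>0$, then by Proposition \ref{propDplus}(ii) the pair $(\bar T,\bar m)$ lies in $\mathcal{C}$ (since $\tilde T(\bar m)=\bar T$ and $g^{-1}(\gamma(\bar m))=\bar m_T$), which would contradict $\mathcal{C}=\emptyset$; hence necessarily $\bar T\leq 0$. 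Then Corollary \ref{cor3}(ii) yields $u=1$ on $\mathcal{D}_-$ and Proposition \ref{propDplus}(ii) (first bullet) yields $u=1$ on $\mathcal{D}_+$, which again patch together into the constant feedback $u\equiv 1$.

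For case (ii), namely $f_-(\bar m)<0$ with $\mathcal{C}\neq\emptyset$, I would argue that $\mathcal{C}\ni(\bar T,\bar m)$ forces $\bar T>0$, so Corollary \ref{cor3}(iii) applies on $\mathcal{D}_-$ giving $u[t,m]=1$ when $m<\bar m$ and $u[t,m]=\bar u$ on the singular portion $\{m=\bar m,\,t<\bar T\}$ (and $u[t,m]=1$ there for $t\geq\bar T$). On $\mathcal{D}_+$, Proposition \ref{propDplus}(ii) (second bullet) yields precisely the feedback \eqref{feed-opti}, namely $u=-1$ on $\mathcal{W}$, $u=\bar u$ on $\{m=\bar m,t<\bar T\}$, and $u=1$ otherwise. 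The two rules coincide on $\{m=\bar m\}$ (note that $\mathcal{W}\subset\{m>\bar m\}$ by its definition \eqref{set-W}), so they assemble into the feedback stated in the theorem.

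The only non-mechanical step is the logical equivalence between ``$\mathcal{C}=\emptyset$'' and ``no singular arc appears'', which I would handle by the short argument above showing $(\bar T,\bar m)\in\mathcal{C}$ whenever $f_-(\bar m)<0$ and $\bar T>0$. Apart from this bookkeeping, the theorem is a direct corollary of the two preceding synthesis results, so I do not anticipate any substantial obstacle; the main care is simply to verify consistency of the feedbacks on the common boundary $\{m=\bar m\}$ and to organise the case distinctions into the dichotomy stated.
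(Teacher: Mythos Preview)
Your approach is exactly the paper's: the theorem is stated as a summary of Corollary~\ref{cor3} and Proposition~\ref{propDplus}, and your gluing argument on $\mathcal{D}_-\cup\mathcal{D}_+$ with the consistency check on $\{m=\bar m\}$ is precisely what that summary entails.

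One small imprecision: in case~(ii) you write ``$\mathcal{C}\ni(\bar T,\bar m)$ forces $\bar T>0$'', but $\mathcal{C}\neq\emptyset$ does not by itself guarantee $(\bar T,\bar m)\in\mathcal{C}$; since $\tilde T'(\bar m)=1/f_1(\bar m)>0$, the function $\tilde T$ could in principle be nonpositive at $\bar m$ yet positive for some larger $\tilde m$. This does not damage the conclusion, however: if $\bar T\le 0$ the clause ``$m=\bar m$ and $t<\bar T$'' in the stated feedback is vacuous, Corollary~\ref{cor3}(ii) gives $u=1$ on $\mathcal{D}_-$, and Proposition~\ref{propDplus}(ii) still yields the feedback~\eqref{feed-opti} on $\mathcal{D}_+$, which agree on $\{m=\bar m\}$ because $\mathcal{W}\subset\{m>\bar m\}$. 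So the theorem holds in either sub-case; you just need to split case~(ii) accordingly rather than assert $\bar T>0$.
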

The vacuity of $\mathcal{C}$ can be verified thanks to the explicit definitions of $\mathcal{C}$ by \eqref{switching-loc}.
\section{Two numerical case studies}
\label{appli-sec}
In this section, we illustrate the previous analysis of optimal trajectories
on two classical models of the literature that fulfill
Hypotheses \ref{H1} and \ref{H2}.
\subsection{Benyahia {\em et al} model}
\label{secModel1}
Consider the following functions that have been validated on
experimental data \cite{Benyahia2013}:
\[
f_{1}(m)=\frac{b}{e+m} , \quad f_{2}(m)=am, \quad g(m)=\frac{1}{e+m},
\]
where $a$, $b$ and $e$ are positive numbers.
One can check that Hypothesis \ref{H1} is fulfilled.
A straightforward computation of the function $\psi$ gives
\[
\begin{array}{lll}
\psi(m) & = & \displaystyle
\frac {-1}{2(e+m)}\left[\left( {\frac {b}{ \left( e+m \right) ^{
2}}}+a\right)  \left({\frac {b}{e+m}}+am \right) +
 \left({\frac {b}{e+m}}-am \right)  \left( {\frac {b
}{ \left( e+m \right) ^{2}}}-a \right)\right] \\[4mm]
& & \displaystyle -{\frac {1}{
 2\left( e+m \right) ^{2}} \left({\frac {b}{e+m}}+am
 \right)  \left({\frac {b}{e+m}}-am \right) }\\[6mm]
& = & \displaystyle \frac {{a}^{2}{e}^{2}{m}^{2}+2\,{a}^{2}e{m}^{3}+{a}^{2}{m}^{4}-2
\,ab{e}^{2}-6\,abem-4\,ab{m}^{2}-{b}^{2}}{ 4\left( e+m \right) ^{4}}.

\end{array}
\]
A further computation of the derivative of $\psi$ gives
\[
\psi'(m)=
\frac {{a}^{2}{e}^{3}m+2\,{a}^{2}{e}^{2}{m}^{2}+{a}^{2}e{m}^{3}+
ab{e}^{2}+5\,abem+4\,ab{m}^{2}+2\,{b}^{2}}{ 2\left( e+m \right) ^{5}}.
\]
which allows to conclude that $\psi$ is increasing on $\Rset_{+}$.
As one has $\psi(0)=-(2abe^2+b^2)/(4e^4)<0$ and
$\lim_{m\to+\infty}\psi(m)=+\infty$, we deduce that Hypothesis
\ref{H2} is fulfilled.
When $\psi$ is null for $m=\bar m$, one has
\[
d(\bar m)=f_{-}'(\bar m)f_{+}(\bar m)-f_{-}(\bar m)f_{+}'(\bar m)
=\frac{-g'(\bar m)f_{+}(\bar m)}{g(\bar m)}f_{-}(\bar m).
\]
Therefore $f_{-}(\bar m)$ and $d(\bar m)$ have the same sign.
A straightforward computation gives
\[
d(m)=-\frac {ab \left( e+2\,m \right) }{ 2\left( e+m \right)
    ^{2}}<0,
\]
and thus one has $d(\bar m)<0$.
Therefore, from Proposition \ref{prop2} and
Corollary \ref{corol}, there exists a singular arc when
$\bar T>0$ and a switching locus  when $\tilde T(\tilde m)>0$.\\

Figure \ref{fig:Synthesis} shows the general synthesis of optimal
controls with the parameters $a=b=e=1$ and for a time horizon of $10$
hours.
In this example one can see that the curve ${\cal C}$ is entirely a
switching locus {\it{i.e.}} one has ${\cal C}={\cal C}_{s}$.
\begin{figure}[h]
\begin{center}
\includegraphics[width=13cm]{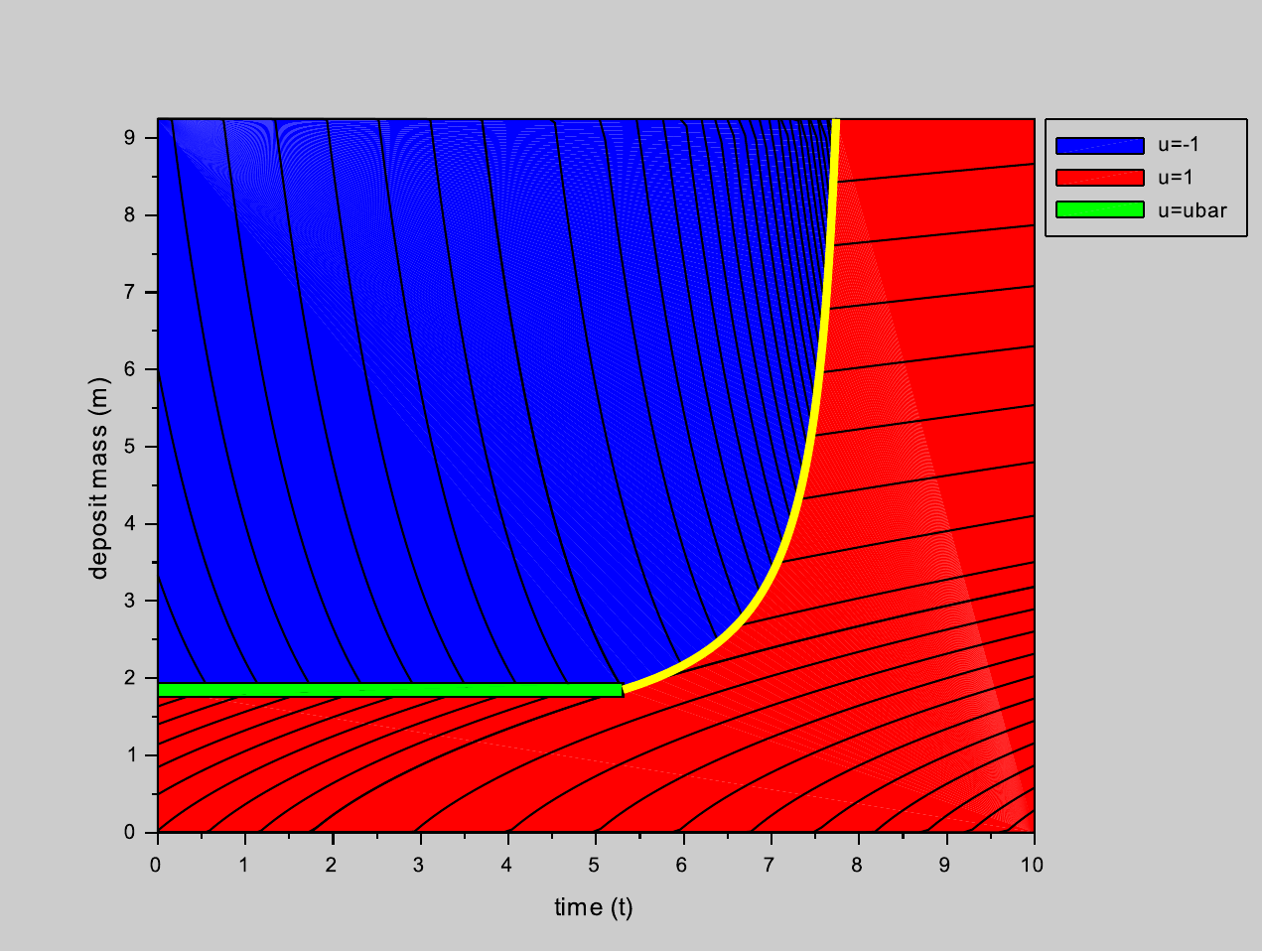}
\caption{Optimal synthesis for the model of Section \ref{secModel1}
  with $a=b=e=1$ and $T=10$ hours. The set ${\cal W}$ is depicted in blue and
  in yellow the switching locus.}
\label{fig:Synthesis}
\end{center}
\end{figure}

\subsection{Cogan-Chellam model}
\label{secModel2}
We now consider the functions
\[
f_{1}(m)=\frac{b}{e+m} , \quad f_{2}(m)=\frac{am}{e+m}, \quad g(m)=\frac{1}{e+m},
\]
where $a$, $b$ and $e$ are positive numbers, as proposed in \cite{Cogan2014,Cogan2016}. Clearly Hypothesis
\ref{H1} is fulfilled. Moreover, one has
\[
\begin{array}{lll}
\psi(m) & = & \displaystyle
-\frac{(ae+b)(b+am)+(ae-b)(b-am)}{4(e+m)^4}
- \frac{(b+am)(b-am)}{4(e+m)^4}\\[4mm]
& = & \displaystyle \frac{a^2m^2-2abe-2abm-b^2}{4(e+m)^4}=
\frac{(am-b)^2-2abe-2b^2}{4(e+m)^4}.
\end{array}
\]
Therefore, the function $\psi$ can have two changes of sign at
\[
\bar m_{-}=\frac{b-\sqrt{2b^2+2abe}}{a} , \quad \bar
m_{+}=\frac{b+\sqrt{2b^2+2abe}}{a},
\]
where $\bar m_{-}$ and $\bar m_{+}$ are respectively negative and positive
numbers. One has also
\[
\psi'(m)=\frac {{a}^{2}em+abe+abm}{
 2\left( e+m \right) ^{5}},
\]
which is positive. Therefore $\psi$ is an increasing function and
Hypothesis \ref{H2} is fulfilled with $\bar m=\bar m_{+}$.
Moreover one can write
\[
f_{-}(\bar m)=\frac{-\sqrt{b^2+2abe}}{e+\bar m}<0 .
\]
Then, as for the previous model, Proposition \ref{prop2} and
Corollary \ref{corol} allow to conclude that there exists a singular arc when
$\bar T>0$ and a switching locus when $\tilde T(\tilde m)>0$.\\

Figure \ref{fig:SynthesisCogan} shows the synthesis of optimal controls for
this model with the parameters $a=b=e=1$ and for a time horizon of $40$ hours.
In this example one can see that the curve ${\cal C}$ splits
into two non-empty subsets ${\cal C}_{s}$ and ${\cal C}_{d}$.
\begin{figure}[h]
\begin{center}
\includegraphics[width=13cm]{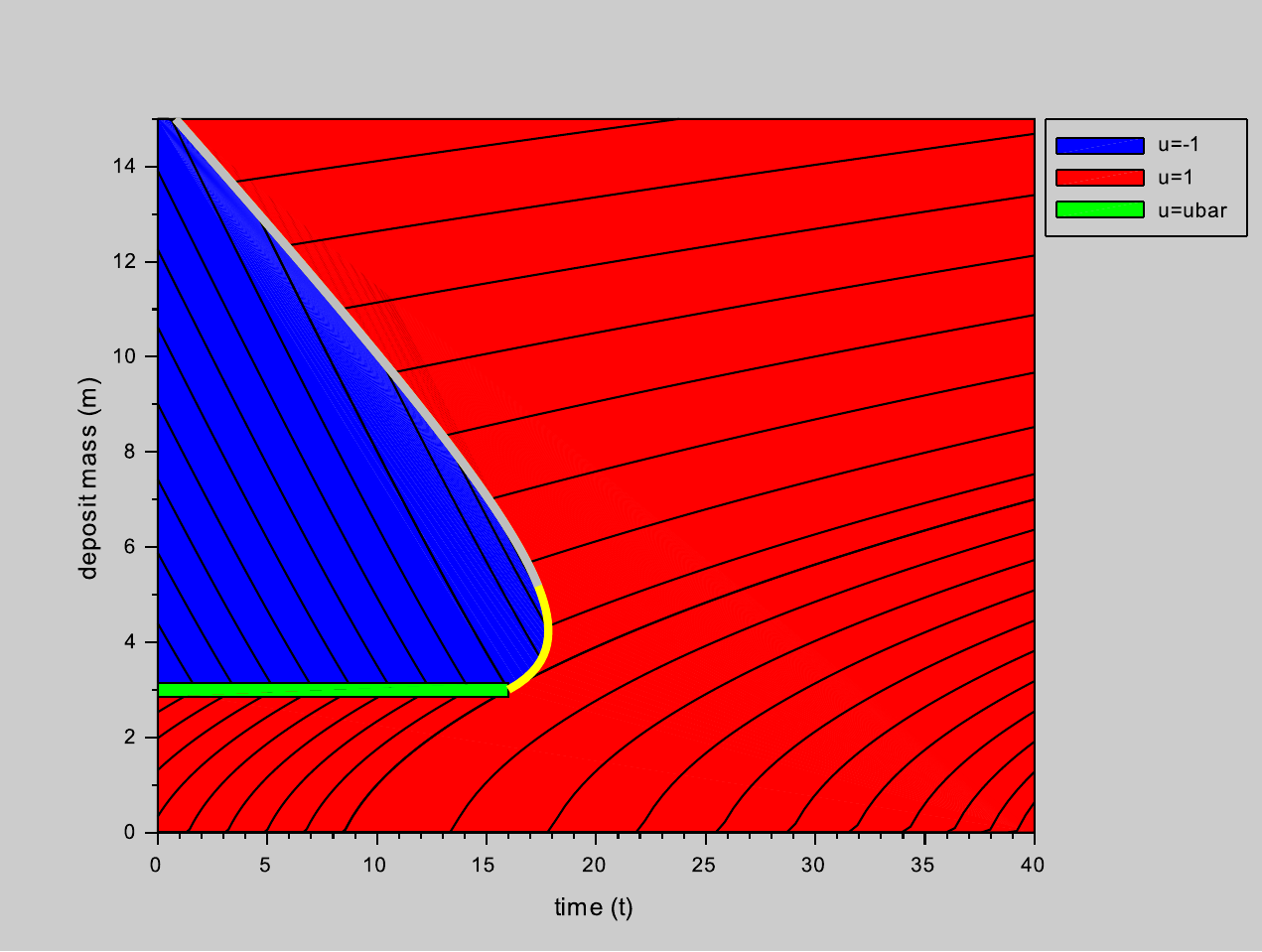}
\caption{Optimal synthesis for the model of Section \ref{secModel2}
  with $a=b=e=1$ and
  $T=40$ hours. The set ${\cal W}$ is depicted in blue,
in yellow the switching locus, and in gray the dispersion locus.}
\label{fig:SynthesisCogan}
\end{center}
\end{figure}

\subsection{Discussion}
Although the two models are very close 
and posses similar optimal syntheses, a main difference occurs on
the size and on the shape of the domain ${\cal W}$ where backwash
has to be applied (see Figures
\ref{fig:Synthesis},\ref{fig:SynthesisCogan}). In particular, its
boundary ${\cal C}$ is entirely a switching curve in one case while
most of it is a dispersal curve in the second case. This should give
valuable information to the practitioners about when and how long
backwashing ({\it{i.e.}} $u=-1$) has to be applied out of the
singular arc.

For the practical implementation of the optimal control law (where
only the values $u=-1$ and $u=1$ can be applied) it is not possible
to stay exactly on the singular arc $m=\bar m$. But an approximation
by a sequence of filtration/backwashing can be applied to stay on
the vicinity of the singular arc. This sequence can be chosen so
that the average value of $m$ is $\bar m$, which provides a good
approximation of the optimal value as it has been tested in
\cite{Kalboussi2017,Kalboussi2017b}. One may argue that the optimal
control problem could be reformulated as a discrete time problem
where the time step is the smallest period of switching between
filtration and backwashing that could be applied in practice. We
believe that this approach gives less geometric insights of the
nature of the optimal control than the continuous formulation.
Moreover, computing the optimal value of the criterion for the
continuous time problem gives an upper bound of what could be
intrinsically expected from the process, independently of the
practical implementations.

\section{Conclusion}
In this work, the application of the Pontryagin Maximum Principle
for the synthesis of optimal control of a switched system shows
interesting results for maximizing the net fluid production (per
filtrate) of a membrane
filtration system. The optimal synthesis exhibits bang-bang controls
with a `most rapid approach' to a singular arc and a switching curve
before reaching the final time. We have also shown that a dispersal
curve may occur, leading to the non-uniqueness of optimal
trajectories. Practically, the determination of the singular arc
allows to compute a sequence of filtration/backwashing to stay about
the singular arc, and the determination of the curve ${\cal C}$
provides the information about the domain where backwashing has to
be applied. The synthesis also reveals that if one wants to
implement a feedback controller, which is more robust than an
open-loop controller, the on-line measurement of the mass deposit
$m$ or of any invertible function of $m$, such as the fluid
flowrate, is crucial.

The main advantage of the present analysis is to describe an optimal
synthesis for a very large class of models relying on simple
qualitative properties of the functions $f_1$, $f_2$ and $g$.

Perspectives of this work are first to implement
the optimal synthesis with real process constraints, and
compare the fluid production (per filtrate) of the membrane filtration process with
the classical operating strategies that are proposed in the literature
and currently used. Extensions to other fluids or non constant TMP and consideration
of multiple objectives (production and energy consumption) could be also
the matter of future works, as well as possibilities of multiple singular arcs.

\section*{Acknowledgments}
The authors thank the TREASURE research network (cf \url{http://www.inra.fr/treasure}) for its financial support.
The third author would like to thank the MIA Division of INRA and
the MISTEA lab, Montpellier  for providing him a half year delegation during the academic year 2017-2018.


\end{document}